\newlength\imagewidth
\newlength\imagescale
\newtheorem{Theorem}{Theorem}[section]
\newtheorem{Definition}[Theorem]{Definition}
\newtheorem{Lemma}[Theorem]{Lemma}
\newtheorem{Corollary}[Theorem]{Corollary}
\newtheorem{Remark}[Theorem]{Remark}
\newcommand{\RR}{{{\rm I} \kern -.15em {\rm R} }}
\newcommand{\C}{{{\rm l} \kern -.42em {\rm C} }}
\newcommand{\nat}{{{\rm I} \kern -.15em {\rm N} }}
\newcommand{\dv}{\mbox{\rm div}\ }
\newcommand{\be}{\begin{equation}}
\newcommand{\ee}{\end{equation}}
\newcommand{\beq}{\begin{eqnarray}}
\newcommand{\eeq}{\end{eqnarray}}
\newcommand{\beqs}{\begin{eqnarray*}}
\newcommand{\eeqs}{\end{eqnarray*}}
\newcommand{\bt}{\begin{Theorem}}
\newcommand{\et}{\end{Theorem}}
\newcommand{\br}{\begin{Remark}}
\newcommand{\er}{\end{Remark}}
\newcommand{\bc}{\begin{Corollary}}
\newcommand{\ec}{\end{Corollary}}
\newcommand{\bl}{\begin{Lemma}}
\newcommand{\el}{\end{Lemma}}
\newcommand{\bd}{\begin{definition}}
\newcommand{\ed}{\end{definition}}
\renewcommand{\geq}{\geqslant}
\renewcommand{\leq}{\leqslant}
\title{Convergence to consensus for a Hegselmann-Krause-type model with distributed time delay}
\author{
Alessandro Paolucci\footnote{Dipartimento di Ingegneria e Scienze dell'Informazione e Matematica, Universit\`{a} di L'Aquila, Via Vetoio, Loc. Coppito, 67010 L'Aquila Italy (\texttt{alessandro.paolucci2@graduate.univaq.it}).}
}
\date{}
\begin{document}

\textwidth=160 mm

\textheight=225mm

\parindent=8mm

\frenchspacing

\maketitle

\begin{abstract}
In this paper we study a Hegselmann-Krause opinion formation model with distributed time delay and positive influence functions. Through a Lyapunov functional approach, we provide a consensus result under a smallness assumption on the initial delay. Furthermore, we analyze a transport equation, obtained as mean-field limit of the particle one. We prove global existence and uniqueness of the measure-valued solution for the delayed transport equation and its convergence to consensus under a smallness assumption on the delay, using a priori estimates which are uniform with respect to the number of agents.
\end{abstract}

\vspace{5 mm}

\def\qed{\hbox{\hskip 6pt\vrule width6pt
height7pt
depth1pt  \hskip1pt}\bigskip}

{\bf Keywords and Phrases:} Hegselmann-Krause model, opinion formation, delay, consensus

\section{Introduction}
In recent years, many researchers have focused their attention to multi-agent systems. One aspect of these models is the natural self-aggregation, which has been studied in different fields such as biology \cite{Cama}, robotics \cite{Desai}, sociology, economics \cite{marsan}, computer science, control theory \cite{Piccoli, PRT, WCB}, social sciences \cite{Campi, T} and many other areas. In these last decades a large number of mathematical models has been proposed to study the consensus behavior. First order models, such as the Hegselmann-Krause model \cite{HK}, have been proposed to study opinion formation. We mention also \cite{JM}, in which bounded confidence yields the so-called clustering phenomenon. Second order models, in particular Cucker-Smale model \cite{CS}, have been studied by many authors \cite{HL, Ha e Tadmor, PR}, in order to describe, for example, flocking of birds, swarming of bacteria, or schooling of fishes.

In addition, it is reasonable to introduce a delay in the model as a reaction time or simply as a time to receive the information from outside, in order to let the dynamics more realistic. For first order models, we refer to \cite{CF, CPP, CP2}, while for delayed Cucker-Smale-type models we mention \cite{CH, CL, HM, PT}. In particular, in very recent papers (see \cite{CP, Liu, PR2}), the authors analyzed modified Cucker-Smale models with distributed time delay, thanks to which agents are influenced by the other ones on a time interval $[t-\tau(t),t].$

Furthermore, delayed and non-delayed kinetic and transport equations associated to the particle multi-agent systems have been studied in \cite{CCR, CFT, CFT2, CH, CPP, CP}.

In this paper, we are interested in the evolution of opinions among $N$ agents, with $N\in \nat$. Let $x_i\in \RR^d$ be the opinion of the $i$-th agent, for any $i=1,...,N$. Then, the dynamics is given by the following Hegselmann-Krause-type model:
\begin{equation}
\label{modello}
\begin{array}{l}
\displaystyle{\frac{dx_i(t)}{dt}=\frac{1}{Nh(t)}\sum_{j\neq i} \int_{t-\tau(t)}^t \alpha (t-s) a_{ij}(t;s) (x_j(s)-x_i(t)) ds, \quad t>0}\\
\hspace{0.21 cm}
\displaystyle{ x_i(s)=x_{i,0}(s), \quad s\in [-\tau(0),0],}
\end{array}
\end{equation}
where $\tau:[0,+\infty)\to(0,+\infty)$ is the time delay. It is a function in $W^{1,\infty}([0,T])$, for any $T>0$ and we assume that $\tau(t)\geq \tau_*$ for some $\tau_*>0$, and
\begin{equation}
\label{ritardo1}
\tau'(t)\leq 0, \qquad \forall \ t\geq 0.
\end{equation}
This implies that $\tau(t)\leq \tau(0)$, for any $t\geq 0$. We stress the fact that constant delays $\tau(t)\equiv \bar{\tau}>0$ are allowed.  

Motivated by \cite{CS, JM, MT}, we take the communication rates $a_{ij}(t;s)$ either of the form
\begin{equation}
\label{symmetric}
a_{ij}(t;s)=\psi(|x_j(s)-x_i(t)|), 
\end{equation}
for any $i,j\in \{ 1,\dots ,N\}$, where $\psi:[0,+\infty)\rightarrow (0,+\infty)$ is a non-increasing function, or
\begin{equation}
\label{nonsymmetric}
a_{ij}(t;s)=\frac{N\psi(|x_j(s)-x_i(t)|)}{\sum_{k=1}^N \psi(|x_k(s)-x_i(t)|)}, \quad \forall \ t\geq 0.
\end{equation}
Without loss of generality, we can assume that $\psi (0)=1$. We notice that in both cases we have that
\begin{equation}
\label{condizione1}
\frac{1}{N}\sum_{j=1}^N a_{ij}(t;s)\leq 1, \quad \forall \ t\geq 0.
\end{equation}
Moreover, $\alpha :[0,\tau(0)]\rightarrow [0,+\infty)$ is a weight function which satisfies
\begin{equation*}
%\label{condizione_alfa}
\underline{A} :=\int_0^{\tau_*} \alpha(s) ds >0.
\end{equation*}
Furthermore, we define for any $t\geq 0$
\begin{equation}
\label{h(t)}
h(t):= \int_0^{\tau(t)} \alpha (s) ds.
\end{equation}
\begin{Remark}
We notice that if $\alpha(s)=\delta_{\tau(t)} (s)$, then system \eqref{modello} can be rewritten as
$$
\begin{array}{l}
\displaystyle{ \frac{dx_i(t)}{dt}= \frac{1}{N}\sum_{j\neq i} a_{ij}(t;t-\tau(t))(x_j(t-\tau(t))-x_i(t)),}\\
\displaystyle{ x_i(s)=x_{i,0}(s), \quad s\in [-\tau(0),0],}
\end{array}
$$
which is already analyzed in \cite{CPP}.
\end{Remark}
We define, now, the following quantity:
$$
d_X(t):=\max_{1\leq i,j\leq N} |x_i(t)-x_j(t)|.
$$
\begin{Definition}
We say that a solution $\{ x_i(t)\}_{i=1,\dots , N}$ to \eqref{modello} converges to \emph{consensus} if 
$$
\lim_{t\rightarrow +\infty } d_X(t)=0.
$$
\end{Definition}
We will prove the following consensus result.
\begin{Theorem}\label{Teorema_1}
Let $\{ x_i(t)\} _{i=1}^N $ be the solution to \eqref{modello}. Suppose that 
\begin{equation}
\label{condizione_ritardo}
\left( e^{\tau(0)}-1\right) h(0) \leq \frac{\underline{A}\psi(2R)^3}{2+\psi(2R)^2}. 
\end{equation}
Then, there exist two positive constants $C,K$ such that
\begin{equation}
\label{tesi_teorema1}
d_X(t)\leq Ce^{-Kt}, \qquad \forall \ t\geq 0.
\end{equation}
\end{Theorem}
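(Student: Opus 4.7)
The plan is to combine an a priori boundedness estimate with a Lyapunov-functional argument tailored to the distributed-delay term. First, I would show that every trajectory stays in a fixed ball $B_R(0)$ with $R := \sup_{i,\, s \in [-\tau(0),0]} |x_{i,0}(s)|$. Rewriting (1.1) as
\[
\dot x_i(t) = \frac{1}{Nh(t)} \sum_{j\ne i}\int_{t-\tau(t)}^t \alpha(t-s)\, a_{ij}(t;s)\,\bigl(x_j(s)-x_i(t)\bigr)\, ds
\]
and using (1.6) together with the definition of $h(t)$ shows that $\dot x_i(t)$ is a convex-combination-type pull toward past values, which by a standard argument propagates the bound $|x_i(t)|\le R$ to all $t\ge 0$. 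Consequently, $|x_i(t)-x_j(s)|\le 2R$ and so $\psi(|x_i(t)-x_j(s)|)\ge \psi(2R)>0$ uniformly; this also gives the crude velocity estimate $|\dot x_i(t)|\le d_Y(t) := \sup_{\sigma\in[t-\tau(0),t]} d_X(\sigma)$, using (1.6) and $\psi\le 1$.

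Next, I would derive a differential inequality for $d_X$. At a.e.\ $t$ one may freeze a pair of indices $(M,m)$ realizing $d_X(t)=|x_M(t)-x_m(t)|$ and differentiate $|x_M-x_m|^2$; the key manipulation is the splitting
\[
x_j(s)-x_i(t) = \bigl(x_j(t)-x_i(t)\bigr) + \bigl(x_j(s)-x_j(t)\bigr),\qquad |x_j(s)-x_j(t)|\le \int_s^t |\dot x_j(\sigma)|\, d\sigma.
\]
The first piece, after summation over $j$ and exploiting the maximality of $(M,m)$ together with $\psi(|x_j(s)-x_i(t)|)\ge\psi(2R)$, yields a dissipative contribution of order $-\underline A\,\psi(2R)\, d_X(t)^2 / h(t)$ (with one additional factor of $\psi(2R)^2$ in the asymmetric case (1.5), accounting for the normalization $\sum_k\psi$; this is the source of the $\psi(2R)^3$ in (1.7)). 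The second piece, combined with the velocity bound from the previous step, is controlled by $h(t)\, d_Y(t)^2$ times a coefficient of order $(e^{\tau(0)}-1)$, which arises naturally once one integrates a Grönwall-type factor $\int_0^{\tau(0)} e^s\, ds$ across the delay window.

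Finally, I would introduce a Lyapunov functional of the form
\[
\mathcal{L}(t) := d_X(t)^2 + \beta \int_{t-\tau(0)}^{t} e^{\,t-\sigma}\, d_X(\sigma)^2\, d\sigma,
\]
with $\beta>0$ chosen so that the positive dissipation from step two dominates the history perturbation precisely when (1.7) holds. Differentiating $\mathcal{L}$ produces a pointwise bound $\mathcal{L}'(t)\le -2K\,\mathcal{L}(t)$ for some $K>0$; Grönwall then gives $\mathcal{L}(t)\le \mathcal{L}(0)\, e^{-2Kt}$, and since $d_X(t)^2\le\mathcal{L}(t)$, this yields the claimed exponential decay (1.8). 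The main obstacle is the bookkeeping in this last step: one must reconcile the time-varying normalization $h(t)$ with the fixed lower bound $\underline A$, absorb the non-symmetric factor cleanly so that exactly $\psi(2R)^3$ and $2+\psi(2R)^2$ emerge in the threshold, and choose $\beta$ so that the history integral cancels the perturbation term without destroying positivity of the dissipative coefficient.
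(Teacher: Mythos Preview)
Your overall scaffold---uniform bound $|x_i|\le R$, a differential inequality for $d_X$ via the splitting $x_j(s)-x_i(t)=(x_j(t)-x_i(t))+(x_j(s)-x_j(t))$, then a Lyapunov functional---matches the paper. But two concrete steps do not close.

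First, the velocity bound $|\dot x_i(t)|\le d_Y(t):=\sup_{\sigma\in[t-\tau(0),t]}d_X(\sigma)$ is not justified. Using (1.6) and $\psi\le 1$ you get $|\dot x_i(t)|$ bounded by a weighted average of $|x_j(s)-x_i(t)|$, and this mixed-time distance is controlled by the diameter of the \emph{space--time tube} $\{x_k(\sigma):k,\ \sigma\in[t-\tau(t),t]\}$, which is in general strictly larger than $d_Y(t)$ (the latter is only a supremum of \emph{equal-time} diameters $d_X(\sigma)$). Without a correct velocity estimate the perturbation term in your inequality for $d_X^2$ cannot be expressed through $d_X$ alone, and your functional $\mathcal{L}(t)=d_X(t)^2+\beta\int_{t-\tau(0)}^t e^{t-\sigma}d_X(\sigma)^2\,d\sigma$ has no mechanism to absorb it. The paper circumvents this by introducing the auxiliary quantity
\[
\gamma(t):=\frac{1}{h(t)}\int_{t-\tau(t)}^t\alpha(t-s)\int_s^t\max_k\Bigl|\frac{dx_k}{dz}(z)\Bigr|\,dz\,ds
\]
and proving the \emph{coupled} inequalities $D^+d_X\le \frac{2}{\psi(2R)}\gamma-\psi(2R)d_X$ and $\max_k|\dot x_k|\le \frac{1}{\psi(2R)}(\gamma+d_X)$. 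The Lyapunov functional is then built as $d_X(t)$ plus $\beta$ times a \emph{triple} integral of $\max_k|\dot x_k|$ (weighted by $\alpha$ and an exponential in $\sigma$), designed so that its derivative regenerates exactly $-\gamma(t)$ and $+\max_k|\dot x_k(t)|$ with coefficients that can be balanced against the two inequalities above.

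Second, your accounting for the $\psi(2R)$ powers is off. The cube $\psi(2R)^3$ and the denominator $2+\psi(2R)^2$ do not come from ``one additional factor of $\psi(2R)^2$ in the asymmetric case''. In \emph{both} cases (1.4) and (1.5) one has simultaneously $a_{ij}\ge\psi(2R)$ and $a_{ij}\le 1/\psi(2R)$; the upper bound is what places $2/\psi(2R)$ in front of $\gamma$ in the $d_X$-inequality and $1/\psi(2R)$ in front of both terms in the velocity inequality. The threshold (1.7) then emerges from the compatibility of two linear constraints on the parameter $\beta$ (the coefficients of $\gamma(t)$ and of $d_X(t)$ in $D^+\mathcal{L}$ must be nonpositive and strictly negative, respectively), and the factor $(e^{\tau(0)}-1)$ comes from the exact computation $\int_0^{\tau(t)}\alpha(s)\int_{t-s}^t e^{-(t-\sigma)}\,d\sigma\,ds\le h(0)(1-e^{-\tau(0)})$, not from a Gr\"onwall step.
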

\begin{Remark}
Here, we stress the fact that the quantity
$$
\left( e^{\tau(0)}-1\right) \int_0^{\tau(0)} \alpha(s) ds
$$ 
is increasing with respect to $\tau(0)$.  Then, \eqref{condizione_ritardo} represents a smallness assumption on $\tau(0)$. Moreover, the right-hand side of \eqref{condizione_ritardo} is increasing with respect to $\psi(2R)$. Therefore, we observe that if $R$ is small enough and/or the decay of $\psi$ is not too fast, then the quantity $$ \frac{\psi(2R)^3}{2+\psi(2R)^2} $$ becomes large and consensus occurs for more values of $\tau(0)$.  
\end{Remark}
The transport equation associated to \eqref{modello} can be obtained as mean-field limit of the particle system \eqref{modello} when $N\rightarrow +\infty$. Let $\mathcal{M}(\RR^d)$ be the set of probability measures on the space $\RR^d$. Then, the transport equation associated to \eqref{modello} reads as
\begin{equation}
\label{kinetic}
\begin{array}{l}
\displaystyle{\partial_t \mu_t+\dv \left( \frac{1}{h(t)}\int_{t-\tau(t)}^t \alpha (t-s) F[\mu_s] ds\ \mu_t \right) =0,\quad x\in \RR^d, \quad t\geq 0}\\
\displaystyle{ \mu_s=g_s \quad s\in[-\tau(0),0],}
\end{array}
\end{equation}
where $F$ is given by either
\begin{equation}
\label{F_symm}
F[\mu_s](x)=\int_{\RR^d} \psi(|x-y|)(y-x) d\mu_s(y), 
\end{equation} 
or
\begin{equation}
\label{F_nonsymm}
F[\mu_s](x)=\frac{\int_{\RR^d} \psi (|x-y|)(y-x) d\mu_s(y)}{\int_{\RR^d} \psi(|x-y|)d\mu_s(y)},
\end{equation}
according to the choice of \eqref{symmetric} and \eqref{nonsymmetric}. Furthermore, we take $g_s\in \mathcal{C}([-\tau(0),0];\mathcal{M}(\RR^d))$. 
\begin{Definition}
Let $T>0$. We say that $\mu_t\in \mathcal{C}([0,T);\mathcal{M}(\RR^d))$ is a weak solution to \eqref{kinetic} on the time interval $[0,T)$ if for all $\varphi\in \mathcal{C}^\infty_c(\RR^d\times [0,T))$ we have the following result:
\begin{equation}
\label{weak_solution}
\int_0^T \int_{\RR^d} \left( \partial_t\varphi+\frac{1}{h(t)}\int_{t-\tau(t)}^t \alpha(t-s)F[\mu_s](x)ds \cdot \nabla_x\varphi \right) d\mu_t(x) dt +\int_{\RR^d} \varphi (x,0)dg_0(x) =0,
\end{equation}
where $F[\mu_s]$ is defined as in \eqref{F_symm} or \eqref{F_nonsymm}.
\end{Definition}
We will prove the following theorem.
\begin{Theorem}\label{Teorema_2}
Let $\mu_t\in \mathcal{C}([0,T];\mathcal{P}_1(\RR^d))$ be a weak solution to \eqref{kinetic}, with compactly supported initial measure $g_s\in \mathcal{C}([-\tau(0),0];\mathcal{P}_1(\RR^d))$ and let $F$ as in \eqref{F_symm} or \eqref{F_nonsymm}. Suppose that
\begin{equation}
\label{delaycondition}
\left( e^{\tau(0)}-1\right) h(0)\leq \frac{\underline{A}\psi(2R)^3}{2+\psi(2R)^2}.
\end{equation}
Then, there exists a constant $C>0$ independent of $t$ such that
\begin{equation}
\label{tesi finale}
d_X(\mu_t)\leq \left( \max_{s\in[-\tau(0),0]} d_X(g_s) \right) e^{-Ct},
\end{equation}
for all $t\geq 0$, where $$
d_X(\mu_t):=\text{diam} \ supp \ \mu_t. 
$$
\end{Theorem}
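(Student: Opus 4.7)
The plan is to reduce Theorem \ref{Teorema_2} to the particle-level Theorem \ref{Teorema_1} via a mean-field approximation. The smallness condition \eqref{delaycondition} depends only on $\tau(0)$, $h(0)$, $\underline{A}$ and the initial-data quantity $R$ appearing in \eqref{condizione_ritardo}; none of these involve $N$, which is exactly why the particle estimate can be leveraged at the kinetic level. Assuming (as the proof of Theorem \ref{Teorema_1} should yield) that the rate $K$ in \eqref{tesi_teorema1} is independent of $N$ and that the prefactor $C$ may be taken proportional to the initial diameter $\max_{s\in[-\tau(0),0]} d_X(g_s)$, the strategy is: (i) approximate $g_s$ by empirical measures, (ii) apply the particle consensus bound uniformly in $N$, and (iii) pass to the limit via a Dobrushin-type stability estimate for \eqref{kinetic}.

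For step (i), for each $N\in\nat$ I would construct histories $x_{i,0}^N\in\mathcal{C}([-\tau(0),0];\RR^d)$ so that the empirical data $g_s^N:=\frac{1}{N}\sum_{i=1}^N \delta_{x_{i,0}^N(s)}$ satisfy $W_1(g_s^N,g_s)\to 0$ uniformly in $s$ and $\max_s d_X(g_s^N)\leq \max_s d_X(g_s)$. The hypothesis \eqref{condizione_ritardo} then holds for every $N$ with the same value of $R$, and solving \eqref{modello} with these data produces empirical solutions $\mu_t^N:=\frac{1}{N}\sum_{i=1}^N \delta_{x_i^N(t)}$ for which, by Theorem \ref{Teorema_1},
$$
d_X(\mu_t^N)\;\leq\;\bigl(\max_{s\in[-\tau(0),0]} d_X(g_s^N)\bigr)\,e^{-Ct}
$$
with $C$ independent of $N$, which is precisely the uniform-in-$N$ a priori estimate alluded to in the abstract.

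For step (iii) I would establish a stability estimate in $1$-Wasserstein distance: if $\mu_t^N$ and $\mu_t$ are weak solutions to \eqref{kinetic} on $[0,T]$ with the respective initial data, then $W_1(\mu_t^N,\mu_t)\leq e^{\Lambda(T)t}\max_{s\in[-\tau(0),0]} W_1(g_s^N,g_s)$ for some $\Lambda(T)>0$. This is obtained by coupling the characteristic flows driven by \eqref{F_symm} or \eqref{F_nonsymm} and applying a Gr\"onwall argument, using Lipschitz continuity of the velocity field with a constant controlled by the diameter of the supports; the distributed delay is handled by splitting $\int_{t-\tau(t)}^t$ into a pre-history part on $[t-\tau(t),0)$, where the data are fixed, and a post-initial part on $[0,t]$. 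Once stability is known, $\mu_t^N\to \mu_t$ in $\mathcal{C}([0,T];\mathcal{P}_1(\RR^d))$, and the uniform diameter bound from step (ii) passes to the Wasserstein limit, yielding \eqref{tesi finale}.

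The main obstacle will be the stability estimate in the non-symmetric case \eqref{F_nonsymm}, where $F[\mu_s]$ is a quotient: Lipschitz control of $F$ requires a uniform positive lower bound on $\int_{\RR^d}\psi(|x-y|)\,d\mu_s(y)$, which in turn demands a priori control of $\mathrm{supp}\,\mu_s$. This forces a bootstrap: one first shows that the supports stay uniformly bounded on every finite $[0,T]$ using the elementary estimate $|F[\mu_s]|\leq d_X(\mu_s)$ and a Gr\"onwall argument, and only then closes the stability estimate and the mean-field limit.
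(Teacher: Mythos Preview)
Your proposal is correct and follows essentially the same route as the paper: approximate $g_s$ by empirical measures, apply the particle consensus estimate of Theorem~\ref{Teorema_1} uniformly in $N$, and pass to the limit via a $d_1$-stability estimate. The paper packages your step~(iii) as a separate lemma (Lemma~\ref{stability}) and handles the uniform support bound---your ``bootstrap''---inside the well-posedness result (Theorem~\ref{esistenza_unicita}), but the architecture is identical; your extra care in arranging $\max_s d_X(g_s^N)\leq \max_s d_X(g_s)$ and in flagging the denominator issue for \eqref{F_nonsymm} is sound and mirrors what the paper actually needs.
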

The paper is organized as follows. In Section \ref{particle_part} we study the consensus behavior of solution to \eqref{modello}, after assuming an upper-bound on the initial delay $\tau(0)$, namely we will prove Theorem \ref{Teorema_1}. In Section \ref{kinetic_part} we focus our attention on system \eqref{kinetic} and we study the existence and uniqueness of the solution and its convergence to consensus.
\section{Consensus results}\label{particle_part}
We notice that $d_X$ may be not differentiable at some $t\geq 0$. Then, we will use a suitable generalized derivative. We define the upper Dini derivative of a continuous function $F$ as follows:
$$
D^+ F(t):=\limsup_{h\rightarrow 0^+} \frac{F(t+h)-F(t)}{h}.
$$

Before studying the convergence to consensus of the solution to \eqref{modello}, we state the following lemma.
\begin{Lemma}\label{lemma1}
Let $\{ x_i(t)\} _{i=1}^N$ be a solution to \eqref{modello}. Suppose that the initial functions $x_{i,0}(s)$ are continuous on the time interval $[-\tau(0),0]$ for all $i=1,\dots,N$. Set $$R:=\max_{s\in [-\tau(0),0]}\max_{1\leq i \leq N} |x_i(s)|.$$ Then,
\begin{equation}\label{TESI1}
\max_{1\leq i\leq N} |x_i(t)|\leq R
\end{equation}
for all $t\geq 0$.
\end{Lemma}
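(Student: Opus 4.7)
The plan is to work with the running maximum
$$W(t) := \sup_{s \in [-\tau(0), t]} \max_{1 \leq i \leq N} |x_i(s)|, \qquad t \geq -\tau(0),$$
which is continuous, non-decreasing, and satisfies $W(0) = R$. The statement then reduces to showing that $W$ is in fact constant; I will deduce this from the bound $D^+ W(t) \leq 0$ for every $t \geq 0$, which combined with monotonicity forces $W \equiv W(0) = R$ and hence $|x_i(t)| \leq W(t) = R$.

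To bound $D^+ W(t)$ I split into two cases. If $\max_i |x_i(t)| < W(t)$, continuity of the $x_i$'s propagates the strict inequality into a right-neighborhood of $t$, so $W$ is locally constant there and $D^+ W(t) = 0$. Otherwise $\max_i |x_i(t)| = W(t)$; fix an index $i^*$ realizing this maximum, so $|x_{i^*}(t)| = W(t)$, and differentiate $|x_{i^*}|^2$ along \eqref{modello}:
$$\frac{d}{dt}|x_{i^*}(t)|^2 = \frac{2}{Nh(t)} \sum_{j \neq i^*} \int_{t-\tau(t)}^{t} \alpha(t-s)\, a_{i^*j}(t;s)\bigl(x_j(s) \cdot x_{i^*}(t) - |x_{i^*}(t)|^2\bigr)\,ds.$$
By \eqref{ritardo1} we have $\tau(t) \leq \tau(0)$, so $[t-\tau(t), t] \subseteq [-\tau(0), t]$ and therefore $|x_j(s)| \leq W(t) = |x_{i^*}(t)|$ for every $j$ and every $s$ in the integration interval. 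Cauchy--Schwarz then yields $x_j(s)\cdot x_{i^*}(t) \leq |x_{i^*}(t)|^2$, making the integrand pointwise non-positive; since $\alpha \geq 0$ and $a_{i^*j} \geq 0$, we conclude $\frac{d}{dt}|x_{i^*}(t)|^2 \leq 0$. The same estimate holds for every index in the argmax set.

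Two standard facts then complete the plan: (i) the upper Dini derivative of the maximum of finitely many $C^1$ functions equals the maximum of their derivatives over the argmax set, which converts the individual bounds $\frac{d}{dt}|x_{i^*}(t)|^2 \leq 0$ into $D^+\max_i |x_i(t)|^2 \leq 0$; (ii) a local Taylor estimate $|x_{i^*}(t+h)|^2 \leq |x_{i^*}(t)|^2 + o(h)$ then promotes this to $D^+ W(t) \leq 0$ in the second case as well. The classical result that a continuous function whose upper Dini derivative is non-positive on an interval is non-increasing then yields $W \equiv W(0) = R$, and the lemma follows. The main delicate point is precisely the passage from the pointwise derivative inequality on the $|x_{i^*}|^2$'s to $D^+ W(t) \leq 0$, which is why the running maximum (rather than the pointwise maximum) is the correct object to work with; note that neither the smallness hypothesis \eqref{condizione_ritardo} nor any decay rate of $\psi$ is used for this \textit{a priori} bound.
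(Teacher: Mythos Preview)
Your proof is correct and reaches the same conclusion via a different packaging of the same core computation. The paper argues by contradiction with an $\epsilon$-buffer: it sets $R_\epsilon = R+\epsilon$, assumes the first exit time $T^\epsilon$ from the ball of radius $R_\epsilon$ is finite, derives on $[0,T^\epsilon]$ the differential inequality $D^+|x_i(t)| \leq R_\epsilon - |x_i(t)|$, and then applies Gronwall to obtain $|x_i(t)| < R_\epsilon$ strictly, contradicting the definition of $T^\epsilon$. You instead track the running maximum $W(t)$ directly and show $D^+ W\leq 0$, avoiding both the $\epsilon$-device and Gronwall. Your route is slightly more economical; the paper's route has the minor advantage that the $\epsilon$-buffer keeps every inequality strict, so the contradiction is entirely elementary, whereas your argument relies on the (standard but more delicate) passage from $\frac{d}{dt}|x_{i^*}(t)|^2\leq 0$ at the contact index to $D^+ W(t)\leq 0$ for the running supremum. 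The underlying observation --- that $\langle x_i(t),\,x_j(s)-x_i(t)\rangle \leq 0$ whenever $|x_j(s)|\leq |x_i(t)|$, together with the non-negativity of $\alpha$ and $a_{ij}$ --- is identical in both arguments.
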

\begin{proof}
Let $\epsilon>0$ and define $R_\epsilon := R+\epsilon$. Set
$$
S^\epsilon =\left\{ t>0 \ : \ \max_{1\leq i\leq N} |x_i(s)|< R_\epsilon , \quad \forall \ s \in [0,t) \right\} . 
$$
By continuity, $S^\epsilon \neq \emptyset$. Denote $T^\epsilon := \sup S^\epsilon$ and assume by contradiction that $T^\epsilon <+\infty$. Then,
\begin{equation}
\label{contraddizione}
\lim_{t\rightarrow T^{\epsilon -}} \max _{1\leq i\leq N} |x_i(t)|=R^\epsilon.
\end{equation}

On the other hand, we have that for any $t\leq T^\epsilon$,
$$
\begin{array}{l}
\displaystyle{ \frac{1}{2}D^+|x_i(t)|^2\leq\left\langle x_i(t),\frac{dx_i(t)}{dt}\right\rangle }\\
\hspace{1.98 cm}
\displaystyle{ =\left\langle x_i(t), \frac{1}{Nh(t)} \sum_{j\neq i} \int_{t-\tau(t)}^t \alpha (t-s) a_{ij}(t;s) (x_j(s)-x_i(t)) ds \right\rangle }\\
\hspace{1.98 cm}
\displaystyle{ =\frac{1}{Nh(t)}\sum_{j\neq i} \int_{t-\tau(t)}^t \alpha (t-s)a_{ij}(t;s)\langle x_i(t),x_j(s)-x_i(t)\rangle ds}\\
\hspace{1.98 cm}
\displaystyle{ =\frac{1}{Nh(t)}\sum_{j\neq i}\int_{t-\tau(t)}^t \alpha (t-s) a_{ij}(t;s) \left( \langle x_i(t),x_j(s)\rangle -|x_i(t)|^2\right) ds}\\
\hspace{1.98 cm}
\displaystyle{ \leq \frac{1}{Nh(t)}\sum_{j\neq i} \int_{t-\tau(t)}^t \alpha(t-s) a_{ij}(t;s) |x_i(t)|\left( |x_j(s)|-|x_i(t)|\right) ds.} 
\end{array}
$$
Using \eqref{condizione1} and the fact that $t\leq T^\epsilon$ yield
$$
\begin{array}{l}
\displaystyle{ \frac{1}{2}D^+|x_i(t)|^2\leq \frac{1}{h(t)}\int_{t-\tau(t)}^t \alpha(t-s)ds\ |x_i(t)| (R^\epsilon-|x_i(t)|)=|x_i(t)|(R^\epsilon-|x_i(t)|).}
\end{array}
$$
Hence, we have that
$$
D^+|x_i(t)|\leq R^\epsilon-|x_i(t)|.
$$
By Gronwall inequality, we obtain
$$
|x_i(t)|\leq e^{-t} \left( |x_i(0)|-R^\epsilon \right)+R^\epsilon < R^\epsilon.
$$
Therefore, 
$$
\lim_{t\rightarrow T^{\epsilon -}} \max_{1\leq i\leq N} |x_i(t)| < R^\epsilon,
$$
which is in contradiction with \eqref{contraddizione}. Moreover, since $\epsilon$ is arbitary, we obtain \eqref{TESI1}.
\end{proof}
\begin{Remark}
Thanks to the previous lemma, we can find a control on $a_{ij}(t;s)$ from below. Indeed, for any $i,j\in \{1,\dots,N\}$, for any $t\geq 0$ and $s\in[t-\tau(t),t]$, we have that
$$
|x_j(s)-x_i(t)|\leq |x_j(s)|+|x_i(t)|\leq 2R.
$$
Hence, from \eqref{symmetric} and \eqref{nonsymmetric}, we can deduce that
\begin{equation}
\label{stima_dal_basso}
a_{ij}(t;s)\geq \psi (2R), \quad \forall\ t\geq 0.
\end{equation}
\end{Remark}
\begin{Lemma}\label{Lemma2}
Let $\{ x_i(t)\}_{i=1}^N$ be the solution to \eqref{modello}. Moreover, define
\begin{equation}
\label{gamma(t)}
\gamma (t):=\frac{1}{h(t)}\int_{t-\tau(t)}^t \alpha (t-s)\int_s^t \max_{1\leq k\leq N} \left| \frac{dx_k(z)}{dz}\right| dz ds, \quad \forall\ t\geq 0.
\end{equation}
Then, 
\begin{equation}\label{tesi_lemma2}
D^+ d_X(t) \leq \frac{2}{\psi(2R)}\gamma(t)-\psi(2R)d_X(t), \quad \forall\ t\geq 0.
\end{equation}
\end{Lemma}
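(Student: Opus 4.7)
The plan is to bound the upper Dini derivative by picking, at each $t\geq 0$ with $d_X(t)>0$, indices $M=M(t)$ and $m=m(t)$ realizing $d_X(t)=|x_M(t)-x_m(t)|$, and setting $v:=(x_M(t)-x_m(t))/|x_M(t)-x_m(t)|$, so that $D^+ d_X(t)\leq \left\langle v,\frac{dx_M(t)}{dt}-\frac{dx_m(t)}{dt}\right\rangle$. Substituting \eqref{modello} and writing $x_j(s)-x_i(t)=(x_j(t)-x_i(t))+(x_j(s)-x_j(t))$ inside each integrand splits the right-hand side into a ``main'' contraction term and a ``delay-error'' term.

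For the main term, set $c_j:=\langle v,x_M(t)-x_j(t)\rangle$. By maximality of $d_X(t)$ one has $0\leq c_j\leq d_X(t)$ for every $j$, with $c_M=0$ and $c_m=d_X(t)$. Using $a_{ij}(t;s)\geq\psi(2R)$ from \eqref{stima_dal_basso} together with $c_j\geq 0$ and $c_M=0$ (the latter letting me extend the sum over $j\neq M$ to a sum over all $j$), the main contribution to $\left\langle v,\frac{dx_M(t)}{dt}\right\rangle$ is bounded above by $-\frac{\psi(2R)}{N}\sum_{j=1}^N c_j=-\psi(2R)\langle v,x_M(t)-\bar{x}(t)\rangle$, where $\bar{x}(t):=\frac{1}{N}\sum_j x_j(t)$. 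Symmetrically, the main contribution to $-\left\langle v,\frac{dx_m(t)}{dt}\right\rangle$ is bounded by $-\psi(2R)\langle v,\bar{x}(t)-x_m(t)\rangle$. Summing and using $\langle v,x_M-\bar{x}\rangle+\langle v,\bar{x}-x_m\rangle=d_X(t)$ produces the clean contraction $-\psi(2R)\,d_X(t)$.

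For the delay-error term, I use the uniform upper bound $a_{ij}(t;s)\leq 1/\psi(2R)$, valid in both the symmetric case \eqref{symmetric} and the non-symmetric case \eqref{nonsymmetric}, together with $|x_j(s)-x_j(t)|\leq\int_s^t\max_{1\leq k\leq N}\left|\frac{dx_k(z)}{dz}\right|dz$. The magnitude of the error contribution coming from each of $\frac{dx_M}{dt}$ and $\frac{dx_m}{dt}$ is then bounded by $\gamma(t)/\psi(2R)$, summing to at most $\frac{2}{\psi(2R)}\gamma(t)$.

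The main technical subtlety is the averaging step in the main term: a crude bound that retains only the $j=m$ summand in $\sum_{j\neq M}a_{Mj}c_j$ would yield only $-\psi(2R)d_X(t)/N$ and lose a factor $N$. Keeping the full sum and combining the contributions from $\frac{dx_M}{dt}$ and $\frac{dx_m}{dt}$ through the identity on $\bar{x}(t)$ is precisely what produces the $N$-uniform contraction rate $\psi(2R)$ appearing in \eqref{tesi_lemma2}.
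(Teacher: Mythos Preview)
Your proposal is correct and follows essentially the same route as the paper: the paper also splits $x_j(s)-x_i(t)$ into $(x_j(s)-x_j(t))+(x_j(t)-x_i(t))$, uses the sign property $\langle x_M-x_m,\,x_k-x_M\rangle\leq 0$ (your $c_k\geq 0$) together with $a_{ij}\geq\psi(2R)$ on the contraction part and $a_{ij}\leq 1/\psi(2R)$ on the delay-error part, and then sums the two contributions to recover $-\psi(2R)d_X(t)$. The only cosmetic differences are that the paper works with $\tfrac{1}{2}D^+ d_X(t)^2$ rather than $D^+ d_X(t)$ via a unit vector, and combines the two contraction sums directly rather than through the barycenter $\bar{x}(t)$.
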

\begin{proof}
Due to continuity of $x_i(t)$, for any $i\in \{1,\dots, N\}$, there exists a sequence of times $\{ t_k\}_{k\in \nat}$ such that 
$$
\bigcup _{k\in \nat} [t_k, t_{k+1}) =[0,+\infty),
$$
and for each $k\in \nat$ and for any $ t\in (t_k,t_{k+1})$ there exist $i,j\in \{ 1,\dots,N\}$ such that
$$
d_X(t)= |x_i(t)-x_j(t)|.
$$
Hence, we have that
\begin{equation}\label{Q1}
\begin{array}{l}
\displaystyle{
\frac{1}{2}D^+ d_X^2(t)\leq \left\langle x_i(t)-x_j(t), \frac{dx_i(t)}{dt}-\frac{dx_j(t)}{dt}\right\rangle }\\
\hspace{1.5 cm}\displaystyle{=\frac{1}{Nh(t)}\left\langle x_i(t)-x_j(t), \sum_{k\neq i} \int_{t-\tau(t)}^t \alpha(t-s) a_{ik}(t;s)(x_k(s)-x_i(t)) ds \right\rangle}\\
\hspace{2 cm}\displaystyle{ - \frac{1}{Nh(t)}\left\langle x_i(t)-x_j(t), \sum_{k\neq j}\int_{t-\tau(t)}^t \alpha(t-s)a_{jk}(t;s)(x_k(s)-x_j(t))ds \right\rangle}\\
\hspace{1.5 cm}\displaystyle{ =:I_1+I_2.}
\end{array}
\end{equation}
Now, $I_1$ and $I_2$ can be rewritten in the following way:
\begin{equation}
\label{passaggio1}
\begin{array}{l}
\displaystyle{ I_1=\frac{1}{Nh(t)}\sum_{k\neq i}\int_{t-\tau(t)}^t \alpha (t-s)  a_{ik}(t;s)\langle x_i(t)-x_j(t), x_k(s)-x_k(t)\rangle ds}\\
\hspace{1.5 cm}
\displaystyle{ + \frac{1}{Nh(t)}\sum_{k\neq i} \int_{t-\tau(t)}^t \alpha (t-s) a_{ik}(t;s)\langle x_i(t)-x_j(t),x_k(t)-x_i(t)\rangle ds}
\end{array}
\end{equation}
and
$$
\begin{array}{l}
\displaystyle{ I_2=-\frac{1}{Nh(t)}\sum_{k\neq j}\int_{t-\tau(t)}^t \alpha (t-s) a_{jk}(t;s) \langle x_i(t)-x_j(t),x_k(s)-x_k(t)\rangle ds}\\
\hspace{1.5 cm}
\displaystyle{ - \frac{1}{Nh(t)}\sum_{k\neq j}\int_{t-\tau(t)}^t \alpha (t-s) a_{jk}(t;s) \langle x_i(t)-x_j(t),x_k(t)-x_j(t)\rangle ds.}
\end{array}
$$
We observe (as in \cite{CPP}) that for any $t\geq 0$,
$$
\langle x_i(t)-x_j(t), x_k(t)-x_i(t)\rangle \leq 0, \quad \forall k\in \{ 1,\dots , N \}.
$$
Moreover, we notice that for any $i,j \in \{ 1,\dots , N\}$ 
\begin{equation}\label{stima_alto}
a_{ij}(t;s)\leq \frac{1}{\psi(2R)}
\end{equation}
in both cases \eqref{symmetric} and \eqref{nonsymmetric}. Indeed, if $a_{ij}$ are as in \eqref{nonsymmetric}, for any $i,j=1,...,N$, then we obtain \eqref{stima_alto}, using \eqref{stima_dal_basso} and the fact that $\psi$ is a non-increasing function with $\psi(0)=1$. Moreover, if we take $a_{ij}$ as in \eqref{symmetric}, then \eqref{stima_alto} immediately follows, using the fact that $a_{ij}(t;s)\leq 1$, for any $i,j=1,...,N$, and $\psi(2R)\leq 1$. Therefore, using \eqref{stima_dal_basso} and \eqref{stima_alto} in \eqref{passaggio1} yield
\begin{equation}
\label{stima2}
\begin{array}{l}
\displaystyle{ I_1 \leq \frac{1}{Nh(t)} \frac{d_X(t)}{\psi(2R)} \sum_{k\neq i} \int_{t-\tau(t)}^t \alpha (t-s)  |x_k(s)-x_k(t)| ds}\\
\hspace{3.5 cm}
\displaystyle{+\frac{\psi(2R)}{N}\sum_{k=1}^N \langle x_i(t)-x_j(t), x_k(t)-x_i(t)\rangle  .}
\end{array}
\end{equation} 
As before, we observe that for any $t\geq 0$
$$
-\langle x_i(t)-x_j(t), x_k(t)-x_j(t)\rangle \leq 0, \quad \forall k\in \{ 1,\dots ,N\} .
$$
Hence, using again \eqref{stima_dal_basso} and \eqref{stima_alto}, we can obtain a similar estimate for $I_2$, namely
\begin{equation}\label{stima_I2}
\begin{array}{l}
\displaystyle{ I_2 \leq \frac{1}{Nh(t)}\frac{d_X(t)}{\psi(2R)} \sum_{k\neq j} \int_{t-\tau(t)}^t \alpha (t-s) |x_k(s)-x_k(t)| ds}\\
\hspace{3,5 cm}
\displaystyle{ +\frac{\psi(2R)}{N}\sum_{k=1}^N \langle x_i(t)-x_j(t), x_j(t)-x_k(t)\rangle .}
\end{array} 
\end{equation}
Using \eqref{stima2} and \eqref{stima_I2} in \eqref{Q1}, we have that
\begin{equation}
\label{Q2}
\begin{array}{l}
\displaystyle{ \frac{1}{2}D^+ d_X(t)^2 \leq \frac{2}{Nh(t)}\frac{d_X(t)}{\psi(2R)}\sum_{k=1}^N \int_{t-\tau(t)}^t \alpha (t-s) |x_k(s)-x_k(t)| ds-\psi(2R) d_X(t)^2.}
\end{array}
\end{equation}
Moreover, we notice that, for $s<t$,
$$
\sum_{k=1}^N |x_k(s)-x_k(t)| \leq \sum_{k=1}^N \int_s^t \left| \frac{dx_k(z)}{dz}\right| dz \leq N \int_s^t \max_{1\leq k\leq N} \left| \frac{dx_k(z)}{dz}\right| dz.
$$
Substituting this estimate in \eqref{Q2}, we obtain 
$$
\begin{array}{l}
\displaystyle{ \frac{1}{2}D^+ d_X(t)^2 \leq \frac{2d_X(t)}{\psi(2R)}\gamma(t)-\psi(2R) d_X(t)^2 ,}
\end{array}
$$
which yields \eqref{tesi_lemma2}.
\end{proof}
\begin{Lemma}\label{Lemma3}
Let $\{ x_i(t)\}_{i=1}^N$ be the solution to \eqref{modello}. Then, for any $t\geq 0$ 
\begin{equation}
\label{tesi_lemma3}
\max_{1\leq i\leq N} \left| \frac{dx_i(t)}{dt}\right| \leq \frac{1}{\psi(2R)}\gamma(t)+\frac{1}{\psi(2R)}d_X(t).
\end{equation}
\end{Lemma}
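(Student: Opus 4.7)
The plan is to bound $|dx_i/dt|$ directly from the defining ODE in \eqref{modello}, splitting the displacement $x_j(s)-x_i(t)$ into a temporal part and a spatial (diameter) part, and then use the a priori bounds on $a_{ij}$ established earlier.

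First I would apply the triangle inequality inside the integrand in \eqref{modello} to obtain
$$
\left|\frac{dx_i(t)}{dt}\right|\leq \frac{1}{Nh(t)}\sum_{j\neq i}\int_{t-\tau(t)}^t \alpha(t-s)a_{ij}(t;s)|x_j(s)-x_i(t)|\,ds.
$$
Then I would split $|x_j(s)-x_i(t)|\leq |x_j(s)-x_j(t)|+|x_j(t)-x_i(t)|$, where the second piece is trivially $\leq d_X(t)$. This cleanly separates the expression into a piece that will become $\gamma(t)$ and a piece that will become $d_X(t)$.

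Next I would invoke the upper bound \eqref{stima_alto}, $a_{ij}(t;s)\leq 1/\psi(2R)$, which holds in both choices \eqref{symmetric} and \eqref{nonsymmetric} and has already been established in the proof of Lemma \ref{Lemma2}. For the $d_X(t)$ piece, the $s$-integral of $\alpha(t-s)$ over $[t-\tau(t),t]$ is exactly $h(t)$ by definition \eqref{h(t)}, and summing over $j\neq i$ and dividing by $N$ gives the factor $(N-1)/N\leq 1$, leaving $d_X(t)/\psi(2R)$. For the other piece I would bound
$$
|x_j(s)-x_j(t)|\leq \int_s^t\left|\frac{dx_j(z)}{dz}\right|dz\leq \int_s^t\max_{1\leq k\leq N}\left|\frac{dx_k(z)}{dz}\right|dz,
$$
so that after pulling the (now $j$-independent) integrand out of the sum over $j\neq i$, the remaining expression is precisely $\gamma(t)/\psi(2R)$ (up to a $(N-1)/N\leq 1$ factor).

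Adding the two contributions gives the pointwise bound
$$
\left|\frac{dx_i(t)}{dt}\right|\leq \frac{1}{\psi(2R)}\gamma(t)+\frac{1}{\psi(2R)}d_X(t),
$$
uniformly in $i$, and taking the maximum over $i$ yields \eqref{tesi_lemma3}. There is no real obstacle here: the argument is a direct estimate, the only subtlety being to peel off the inner time-increment $x_j(s)-x_j(t)$ so that one recognizes the definition \eqref{gamma(t)} of $\gamma(t)$ exactly, and to remember that the $1/\psi(2R)$ upper bound on $a_{ij}$ works uniformly for both communication-rate choices thanks to Lemma \ref{lemma1}.
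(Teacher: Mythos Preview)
Your proposal is correct and follows essentially the same approach as the paper's own proof: split $x_j(s)-x_i(t)$ into the temporal increment $x_j(s)-x_j(t)$ and the spatial difference $x_j(t)-x_i(t)$, apply the upper bound \eqref{stima_alto}, and recognize $\gamma(t)$ and $d_X(t)$ in the two resulting pieces. The paper's proof is in fact terser than yours, skipping over the $(N-1)/N\leq 1$ observation and the intermediate fundamental-theorem-of-calculus step, but the logic is identical.
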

\begin{proof}
We have that for any $i\in \{ 1,\dots , N\}$, 
$$
\begin{array}{l}
\displaystyle{ \left| \frac{dx_i(t)}{dt}\right| \leq \frac{1}{Nh(t)}\sum_{k\neq i} \int_{t-\tau(t)}^t \alpha(t-s) a_{ik}(t;s) |x_k(s)-x_k(t)| ds}\\
\hspace{2 cm}
\displaystyle{+ \frac{1}{Nh(t)}\sum_{k\neq i} \int_{t-\tau(t)}^t \alpha (t-s) a_{ik}(t;s) |x_k(t)-x_i(t)|ds.}
\end{array}
$$
Using \eqref{stima_alto} yields
$$
\begin{array}{l}
\displaystyle{ \left| \frac{dx_i(t)}{dt}\right| \leq \frac{1}{\psi(2R)}\gamma(t)+\frac{1}{\psi(2R)} d_X(t).} 
\end{array}
$$
Taking the maximum, we obtain \eqref{tesi_lemma3}.
\end{proof}
Now, we are ready to prove Theorem \ref{Teorema_1}.

\begin{proof}[Proof of Theorem \ref{Teorema_1}]
Define the following Lyapunov functional:
$$
\mathcal{L}(t) := d_X(t)+\beta \int_0^{\tau(t)} \alpha (s) \int_{t-s}^t e^{-(t-\sigma)} \int_{\sigma}^t \max_{1\leq k\leq N} \left| \frac{dx_k(\rho)}{d\rho}\right| d\rho d\sigma ds,
$$
with $\beta >0$. Then,
$$
\begin{array}{l}
\displaystyle{ D^+\mathcal{L}(t)= D^+ d_X(t)+\beta \tau'(t) \alpha (\tau(t))\int_{t-\tau(t)}^t e^{-(t-\sigma)} \int_\sigma^t \max_{1\leq k\leq N} \left| \frac{dx_k(\rho)}{d\rho}\right| d\rho d\sigma }\\
\hspace{2 cm}
\displaystyle{ -\beta \int_0^{\tau(t)}  \alpha(s)e^{-s} \int_{t-s}^t \max_{1\leq k\leq N} \left| \frac{dx_k(\rho)}{d\rho}\right| d\rho ds}\\
\hspace{2 cm}
\displaystyle{-\beta  \int_0^{\tau(t)} \alpha (s) \int_{t-s}^t e^{-(t-\sigma)} \int_{\sigma}^t \max_{1\leq k\leq N} \left| \frac{dx_k(\rho)}{d\rho}\right| d\rho d\sigma ds}\\
\hspace{2 cm}
\displaystyle{ + \beta \max_{1\leq k\leq N} \left| \frac{dx_k(t)}{dt}\right| \int_0^{\tau(t)} \alpha(s) \int_{t-s}^t e^{-(t-\sigma)} d\sigma ds.}

\end{array}
$$
Using $\underline{A}\leq h(t) \leq h(0)$ and $\tau'(t)\leq0$, we deduce
$$
\begin{array}{l}
\displaystyle{ D^+\mathcal{L}(t)\leq D^+ d_X(t) -\beta e^{-\tau(0)} \underline{A} \gamma(t)-\beta \int_0^{\tau(t)} \alpha (s) \int_{t-s}^t e^{-(t-\sigma)} \int_{\sigma}^t \max_{1\leq k\leq N} \left| \frac{dx_k(\rho)}{d\rho}\right| d\rho d\sigma ds}\\
\hspace{2.5 cm}
\displaystyle{ +\beta h(0)(1-e^{-\tau(0)})\max_{1\leq k\leq N} \left| \frac{dx_k(t)}{dt}\right| .}
\end{array}
$$
Now, since \eqref{tesi_lemma2} and \eqref{tesi_lemma3} hold, we have that
$$
\begin{array}{l}
\displaystyle{ D^+\mathcal{L}(t)\leq \left( \frac{2}{\psi(2R)}-\beta e^{-\tau(0)} \underline{A}+\beta h(0)(1-e^{-\tau(0)})\frac{1}{\psi(2R)} \right) \gamma(t)}\\
\displaystyle{ + \left( -\psi(2R)+\beta h(0) \frac{1-e^{-\tau(0)}}{\psi(2R)} \right) d_X(t) -\beta \int_0^{\tau(t)} \alpha (s) \int_{t-s}^t e^{-(t-\sigma)} \int_{\sigma}^t \max_{1\leq k\leq N} \left| \frac{dx_k(\rho)}{d\rho}\right| d\rho d\sigma ds.}
\end{array}
$$
We want to show that for $\tau(0)$ sufficiently small we obtain the existence of $K>0$ such that
\begin{equation}
\label{scopo_teorema}
D^+ \mathcal{L}(t)\leq -K\mathcal{L}(t), \quad \forall \ t\geq 0.
\end{equation}
This is true if the following two conditions hold:
\begin{equation}
\displaystyle{\frac{2}{\psi(2R)}-\beta e^{-\tau(0)} \underline{A}+\beta h(0)(1-e^{-\tau(0)})\frac{1}{\psi(2R)}\leq  0, \label{cond1}}
\end{equation}
\begin{equation}
\displaystyle{ -\psi(2R)+\beta h(0) \frac{1-e^{-\tau(0)}}{\psi(2R)}< 0. \label{cond2}}
\end{equation}
The inequality \eqref{cond2} is satisfied for
\begin{equation}
\label{beta1}
\beta < \frac{\psi(2R)^2}{h(0)(1-e^{-\tau(0)})}.
\end{equation} 
Now, in order to have \eqref{cond1}, we need
$$
h(0) \left( e^{\tau(0)}-1\right) < \underline{A}\psi(2R).
$$
Hence, \eqref{cond1} is satisfied if
\begin{equation}
\label{beta2}
\beta \geq \frac{2}{e^{-\tau(0)} \underline{A}\psi(2R)-h(0)(1-e^{-\tau(0)})}.
\end{equation}
Then, in order to have the existence of the parameter $\beta>0$ such that \eqref{beta1} and \eqref{beta2} hold, we need
$$
\frac{2}{e^{-\tau(0)} \underline{A}\psi(2R)-h(0)(1-e^{-\tau(0)})} < \frac{\psi(2R)^2}{h(0)(1-e^{-\tau(0)})},
$$
which is true for any $\tau(0)$ satisfying \eqref{condizione_ritardo}. Choosing
$$
K=\min \left\{ \beta, \psi(2R)-\beta h(0) \frac{1-e^{-\tau(0)}}{\psi(2R)} \right\} ,
$$ 
we obtain \eqref{scopo_teorema}. We notice that since $\beta$ satisfies \eqref{beta1}, then $K>0$. This implies immediately \eqref{tesi_teorema1}. Hence, the theorem is proved.
\end{proof}
\section{Consensus of solution to \eqref{kinetic}}\label{kinetic_part}
In this section we want to analyse the transport equation \eqref{kinetic} associated to \eqref{modello}, obtained as mean-field limit of the particle system when $N\rightarrow +\infty$. To do so, we consider $\psi$ Lipschitz continuous and we denote by $L$ its Lipschitz constant. 

Before proving the existence and uniqueness of solutions to \eqref{kinetic}, we first recall some tools on probability spaces and measures.
\begin{Definition}
Let $\mu,\nu\in \mathcal{M}(\RR^d)$ be two probability measures on $\RR^d$. We define the 1-Wasserstein distance between $\mu$ and $\nu$ as
$$
d_1(\mu,\nu):=\inf_{\pi\in \Pi(\mu,\nu)} \int_{\RR^d\times \RR^d} |x-y|d\pi(x,y),
$$
where $\Pi (\mu,\nu)$ is the space of all couplings for $\mu$ and $\nu$, namely all those probability measures on $\RR^{2d}$ having as marginals $\mu$ and $\nu$:
$$
\int_{\RR^d\times\RR^d} \varphi (x)d\pi(x,y)=\int_{\RR^d} \varphi(x)d\mu(x), \quad \int_{\RR^d\times\RR^d} \varphi(y)d\pi(x,y)=\int_{\RR^d}\varphi(y)d\nu(y),
$$
for all $\varphi \in \mathcal{C}_b(\RR^d)$.
\end{Definition}
It's well-known that $(\mathcal{P}_1(\RR^d),d_1)$ (where $\mathcal{P}_1$ is the space of all probability measures with finite first-order moment) is a complete metric space. Moreover, in order to prove the existence of solution to \eqref{kinetic}, we need the following definition.
\begin{Definition}
Let $\mu$ be a Borel measure on $\RR^d$ and let $T:\RR^d\rightarrow\RR^d$ be a measurable map. We define the push-forward of $\mu$ via $T$ as the measure
$$
T\# \mu(A):=\mu(T^{-1}(A)),
$$
for all Borel sets $A\subset \RR^d$.
\end{Definition}
Then, we have the following theorem.
\begin{Theorem}
\label{esistenza_unicita}
Consider the system \eqref{kinetic} with $g_s\in \mathcal{C}([-\tau(0),0];\mathcal{P}_1(\RR^d)).$ Suppose that there exists a constant $R>0$ such that
$$
supp \ g_t \in B^d(0,R),
$$
for all $t\in [-\tau(0),0],$ where $B^d(0,R)$ denotes the ball of radius $R$ in $\RR^d$ centered at the origin. Then, for any $T>0$ there exists a unique weak solution $\mu_t\in \mathcal{C}([0,T);\mathcal{P}_1(\RR^d))$ of \eqref{kinetic} in the sense of \eqref{weak_solution}. Moreover, $\mu_t$ is uniformly compactly supported and \begin{equation}
\label{pushforward}
\mu_t =X(t;\cdot)\#\mu_0,
\end{equation}
where $X(t;\cdot)$ is the solution of the characteristic system associated to \eqref{kinetic} for any $t\in[0,T)$.
\end{Theorem}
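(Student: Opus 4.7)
The plan is to construct the solution by a fixed-point argument on characteristic flows, in the spirit of \cite{CFT,CPP} but adapted to the distributed delay. Given a candidate path $\mu\in \mathcal{C}([-\tau(0),T];\mathcal{P}_1(\RR^d))$ with $\mu_s=g_s$ on $[-\tau(0),0]$ and $\mathrm{supp}\,\mu_t\subset B^d(0,R)$ for all $t$, I introduce the non-autonomous vector field
$$
V^{\mu}(t,y):=\frac{1}{h(t)}\int_{t-\tau(t)}^t \alpha(t-s)\, F[\mu_s](y)\, ds,
$$
and define an operator $\mathcal{T}$ by $(\mathcal{T}\mu)_t:=X^{\mu}(t;\cdot)\# g_0$ for $t\in[0,T)$, extended by $g_s$ on $[-\tau(0),0]$, where $X^{\mu}(\,\cdot\,;x)$ is the flow generated by $V^{\mu}$ starting from $X^{\mu}(0;x)=x$. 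The unique weak solution in the sense of \eqref{weak_solution} will then be identified with the unique fixed point of $\mathcal{T}$, giving \eqref{pushforward}.

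The well-posedness of $X^{\mu}(t;\cdot)$ follows from a uniform Lipschitz bound on $V^{\mu}(t,\cdot)$: since $\psi$ is Lipschitz and $\mathrm{supp}\,\mu_s\subset B^d(0,R)$ for all admissible $s$, both \eqref{F_symm} and \eqref{F_nonsymm} give maps $y\mapsto F[\mu_s](y)$ Lipschitz uniformly in $s$, the nonsymmetric case requiring the lower bound $\int \psi(|y-z|)\, d\mu_s(z)\geq \psi(2R)>0$. Because $\alpha/h(t)$ is a probability density on $[t-\tau(t),t]$, the Lipschitz constant of $V^{\mu}(t,\cdot)$ is uniform in $t$. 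To show that $\mathcal{T}$ is a contraction on a short time interval $[0,T^*]$ with respect to $d_\infty(\mu,\nu):=\sup_{t\in[-\tau(0),T^*]}d_1(\mu_t,\nu_t)$, I would combine the Kantorovich-duality estimate
$$
d_1\bigl(X^{\mu}(t;\cdot)\# g_0,\,X^{\nu}(t;\cdot)\# g_0\bigr)\leq \int_{\RR^d}|X^{\mu}(t;x)-X^{\nu}(t;x)|\, dg_0(x)
$$
with Grönwall applied to the characteristic ODE and the stability estimate $|F[\mu_s](y)-F[\nu_s](y)|\leq C\, d_1(\mu_s,\nu_s)$ (by Kantorovich duality; in the nonsymmetric case one additionally uses the uniform lower bound on the denominator). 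Iterating on successive intervals of length $T^*$ then yields existence and uniqueness on $[0,T)$.

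The a priori bound $\mathrm{supp}\,\mu_t\subset B^d(0,R)$, which closes the iteration and ensures uniform compact support, is proved by adapting Lemma \ref{lemma1} to the characteristic flow: for $\varepsilon>0$ and $R^{\varepsilon}:=R+\varepsilon$, a continuity/contradiction argument shows that as long as $|X^{\mu}(s;x)|<R^{\varepsilon}$ on $[-\tau(0),t)$ one has, using the kinetic analogue of \eqref{condizione1},
$$
\frac{1}{2}D^+|X^{\mu}(t;x)|^2\leq \langle X^{\mu}(t;x),V^{\mu}(t,X^{\mu}(t;x))\rangle\leq |X^{\mu}(t;x)|\bigl(R^{\varepsilon}-|X^{\mu}(t;x)|\bigr),
$$
so Grönwall yields $|X^{\mu}(t;x)|< R^{\varepsilon}$; letting $\varepsilon\to0$ gives the claim. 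Once this is known, the weak formulation \eqref{weak_solution} is checked by differentiating $t\mapsto \int \varphi(x,t)\, d\mu_t(x)=\int \varphi(X^{\mu}(t;x),t)\, dg_0(x)$ via the chain rule and invoking the characteristic ODE.

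The main obstacle is this a priori compact-support bound: in the nonsymmetric case \eqref{F_nonsymm} the Lipschitz constant of $F[\mu_s]$ depends inversely on $\psi$ evaluated at the support diameter, so without a uniform-in-time bound on $\mathrm{supp}\,\mu_s$ the Lipschitz estimates driving the fixed-point iteration would deteriorate and prevent continuation to an arbitrary $T>0$. Once the support bound is in hand, the remainder of the proof follows the standard mean-field template, and uniqueness of the weak solution follows either from the contractivity of $\mathcal{T}$ or from a direct $d_1$-stability estimate at the level of \eqref{weak_solution}.
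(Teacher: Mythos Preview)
Your proposal is correct and follows essentially the same line as the paper: verify that the delayed velocity field is locally Lipschitz and bounded whenever the supports are controlled, represent the solution as the push-forward of $g_0$ along the characteristic flow, and close the argument with the support bound $|X(t;x)|\leq R$ obtained by the Lemma~\ref{lemma1}-type Gr\"onwall estimate. The paper differs only procedurally: instead of spelling out the contraction map $\mathcal{T}$, it invokes \cite[Theorem~3.10]{CCR} as a black box for local well-posedness and then continues by the method of steps on intervals $[k\tau_*,(k+1)\tau_*]$ (using the lower bound $\tau(t)\geq\tau_*$) rather than on contraction intervals of length $T^*$; both routes yield the same conclusion.
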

\begin{proof}
First of all we claim that for any $t\in[0,T]$, there exist two positive constants $C,K>0$ such that 
$$
\left| \frac{1}{h(t)}\int_{t-\tau(t)}^t\alpha(t-s) F[\mu_s](x)ds-\frac{1}{h(t)}\int_{t-\tau(t)}^t \alpha(t-s) F[\mu_s](\tilde{x}) ds\right| \leq C|x-\tilde{x}|,
$$
for any $x,\tilde{x} \in B^d(0,R)$, and
$$
\left| \frac{1}{h(t)}\int_{t-\tau(t)}^t \alpha(t-s) F[\mu_s](x)ds\right| \leq K,
$$
for all $x\in B^d(0,R)$, with $F$ as in \eqref{F_symm} or in \eqref{F_nonsymm}. The proof of this claim is very similar to \cite[Lemma 3.4]{CPP}. Then, from \cite[Theorem 3.10]{CCR}, we deduce that there exists a unique weak solution to \eqref{kinetic} in the sense of \eqref{weak_solution} and it exists as long as $\mu_t$ is compactly supported. Hence, we need to estimate the growth of support. To do so, we set
$$
R_X[\mu_t]:=\max_{x\in \overline{supp\ \mu_t}} |x|,
$$
for $t\in[0,T]$ and we define
$$
R_X(t):=\max_{-\tau(0)\leq s\leq t} R_X[\mu_s].
$$
Now, we proceed by steps. We consider $t\in[0,\tau_*]$ and we construct the system of characteristics $X(t;x):[0,\tau_*]\times \RR^d\rightarrow \RR^d$ associated to \eqref{kinetic}:
\begin{equation}
\label{caratteristiche}
\begin{array}{l}
\displaystyle{\frac{dX(t;x)}{dt}=\frac{1}{h(t)}\int_{t-\tau(t)}^t \alpha (t-s) F[\mu_s](X(s;x)) ds,}\\
\displaystyle{ X(0;x)=x,\quad x\in \RR^d.}
\end{array}
\end{equation} 
We notice that the system \eqref{caratteristiche} is well-defined, since the velocity field
$$
\frac{1}{h(t)}\int_{t-\tau(t)}^t \alpha (t-s) F[\mu_s] ds
$$
is locally Lipschitz and locally bounded. Then, arguing as in Lemma \ref{lemma1}, we have that
$$
\frac{d|X(t;x)|}{dt}\leq R_X(t)-|X(t;x)|,
$$ 
which yields
$$
R_X(t)<R_X(0),
$$
for any $t\in[0,\tau_*]$. Thus, we obtain a unique solution $\mu_t$ to \eqref{kinetic} on the time interval $[0,\tau_*]$. We can iterate this process on all the intervals of the type $[k\tau_*,(k+1)\tau_*]$, with $k=1,2,\dots$, until we reach the final time $T$. Moreover, following \cite{CCR}, it's possible to find a measure $\mu_t$ which satisfies \eqref{pushforward} and this is equivalent to the definiton of weak solution \eqref{weak_solution}.
\end{proof}
\subsection{Consensus behavior}
In this subsection we will prove the consensus behavior of the solution to \eqref{kinetic}, with $F$ as in \eqref{F_symm} or \eqref{F_nonsymm}. To do so, we firstly need the following stability result.
\begin{Lemma}
\label{stability}
Let $\mu^1_t,\mu_t^2\in \mathcal{C}([0,T];\mathcal{P}_1(\RR^d))$ be two weak solutions to \eqref{kinetic}, with compactly supported initial data $g^1_s,g^2_s\in \mathcal{C}([-\tau(0),0];\mathcal{P}_1(\RR^d))$ respectively. Then, there exists a constant $C>0$ depending only on $T$ such that
\begin{equation}
\label{stability_result}
d_1(\mu^1_t,\mu^2_t)\leq C\max_{s\in[-\tau(0),0]} d_1(g^1_s,g^2_s),
\end{equation}
for any $t\in[0,T]$.
\end{Lemma}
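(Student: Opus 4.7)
The strategy combines the characteristic representation from Theorem \ref{esistenza_unicita} with a two-layer Gronwall argument. Let $X^i(t;\cdot)$ denote the characteristic flow of the velocity field
\begin{equation*}
V^i(t,z) := \frac{1}{h(t)}\int_{t-\tau(t)}^t \alpha(t-s)\, F[\mu^i_s](z)\, ds,
\end{equation*}
so that $\mu^i_t = X^i(t;\cdot)\# g^i_0$ by \eqref{pushforward}. Pick an optimal coupling $\pi_0 \in \Pi(g^1_0, g^2_0)$ for $d_1(g^1_0, g^2_0)$; then $(X^1(t;\cdot), X^2(t;\cdot))\#\pi_0$ is a (not necessarily optimal) coupling of $\mu^1_t$ and $\mu^2_t$, hence
\begin{equation*}
d_1(\mu^1_t,\mu^2_t) \leq \int_{\RR^d\times\RR^d} |X^1(t;x) - X^2(t;y)| \, d\pi_0(x,y).
\end{equation*}

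The core of the argument is to produce two estimates on $V^i$, uniform over $t \in [0,T]$ and $z$ in a common compact set (available since Theorem \ref{esistenza_unicita} yields uniform compact support for both solutions): (a) Lipschitz continuity in space, $|V^i(t,z) - V^i(t,\tilde z)| \leq C|z-\tilde z|$, already built into the proof of Theorem \ref{esistenza_unicita}; and (b) stability in the measure, $|V^1(t,z) - V^2(t,z)| \leq C\sup_{s \in [t-\tau(t),t]} d_1(\mu^1_s,\mu^2_s)$, with the convention $\mu^i_s = g^i_s$ for $s \in [-\tau(0),0]$. For the symmetric kernel \eqref{F_symm}, estimate (b) follows from Kantorovich-Rubinstein duality applied to the Lipschitz function $y\mapsto \psi(|z-y|)(y-z)$, whose Lipschitz constant is controlled by $L$, $R$ and $\psi(0)=1$. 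For the non-symmetric kernel \eqref{F_nonsymm}, one writes
\begin{equation*}
\frac{N_1}{D_1} - \frac{N_2}{D_2} = \frac{N_1 - N_2}{D_1} + N_2 \cdot \frac{D_2 - D_1}{D_1 D_2},
\end{equation*}
where $N_i, D_i$ denote the numerator and denominator of $F[\mu^i_s](z)$ in \eqref{F_nonsymm}, and uses the uniform lower bound $D_i \geq \psi(2R) > 0$ from \eqref{stima_dal_basso} together with duality on $N_1-N_2$ and $D_2-D_1$ separately.

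Combining (a) and (b) with the triangle inequality,
\begin{equation*}
\frac{d}{dt}|X^1(t;x) - X^2(t;y)| \leq C |X^1(t;x) - X^2(t;y)| + C \sup_{s\in [t-\tau(t),t]} d_1(\mu^1_s,\mu^2_s).
\end{equation*}
A first Gronwall application and integration against $\pi_0$ give
\begin{equation*}
d_1(\mu^1_t,\mu^2_t) \leq e^{CT} d_1(g^1_0,g^2_0) + C e^{CT}\int_0^t \sup_{s\in [r-\tau(r),r]} d_1(\mu^1_s,\mu^2_s)\, dr.
\end{equation*}
Setting $W(t) := \sup_{s \in [-\tau(0),t]} d_1(\mu^1_s,\mu^2_s)$, the function $W$ is non-decreasing with $W(0) = \max_{s\in[-\tau(0),0]} d_1(g^1_s,g^2_s)$, so $\sup_{s\in[r-\tau(r),r]} d_1(\mu^1_s,\mu^2_s) \leq W(r)$. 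Passing to the sup over $t \in [0,T]$ produces a linear Gronwall inequality for $W$, whose solution is bounded by $C\,W(0)$ with $C$ depending only on $T$ (through $R$, $L$, $\psi(2R)$, $\underline{A}$, $h(0)$ and $\tau(0)$); this is precisely \eqref{stability_result}.

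The main obstacle is the measure-stability estimate (b) in the non-symmetric case \eqref{F_nonsymm}: the quotient structure forces the additive/multiplicative manipulation above, and the argument relies crucially on the uniform lower bound $\psi(2R)$ for the denominator, which is available only because the supports of $\mu^1_t$ and $\mu^2_t$ remain in a common ball for all $t \in [0,T]$. The delay is handled transparently by the monotone envelope $W$, which replaces the integral over the sliding window $[t-\tau(t),t]$ by an integral over $[0,t]$ at the cost of enlarging the constant; no smallness assumption on $\tau(0)$ is needed here since we only seek stability on the finite horizon $[0,T]$.
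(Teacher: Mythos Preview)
Your proposal is correct and follows the same overall architecture as the paper's proof---characteristic flows plus a delay-Gronwall argument---but the technical execution differs in two notable ways.

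First, the paper works in the Monge framework: it fixes an optimal transport \emph{map} $S_0$ with $\mu^2_0 = S_0\#\mu^1_0$, sets $T^t := X^2(t;\cdot)\circ S_0 \circ X^1(t;\cdot)^{-1}$, and tracks the single functional $u(t)=\int |x-T^t(x)|\,d\mu^1_t(x)$. It then bounds $|F[\mu^1_s](x)-F[\mu^2_s](T^t(x))|$ \emph{directly}, rewriting the $\mu^2_s$-integral via $T^s$ and obtaining a closed delay integral inequality for $u$. You instead work in the Kantorovich framework with an optimal \emph{coupling} $\pi_0$, and you split $|V^1(t,X^1)-V^2(t,X^2)|$ by triangle inequality into a space-Lipschitz part and a measure-stability part, the latter controlled by $d_1(\mu^1_s,\mu^2_s)$ through Kantorovich--Rubinstein duality. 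This buys you robustness (no need to justify existence of Monge maps, which the paper tacitly assumes) and a cleaner separation of the two mechanisms, at the cost of a two-layer Gronwall (first for the characteristic distance, then for the envelope $W$) rather than the paper's single inequality for $u$.

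Second, the handling of the delay is morally the same: the paper substitutes $w(t)=e^{-Ct}u(t)$ and bounds $\int_{-\tau(0)}^t \alpha(t-s)w(s)\,ds$ by splitting at $s=0$, while you absorb the sliding window $[t-\tau(t),t]$ into the monotone envelope $W$. Both reduce to a standard Gronwall on $[0,T]$ with constants depending on $T$, $R$, $L$, $\psi(2R)$, $\underline{A}$, $h(0)$ and $\tau(0)$. The non-symmetric case \eqref{F_nonsymm} is treated identically in both proofs via the quotient identity $N_1/D_1 - N_2/D_2 = (N_1-N_2)/D_1 + N_2(D_2-D_1)/(D_1D_2)$ together with the uniform lower bound $D_i\geq \psi(2R)$.
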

\begin{proof}
For $i=1,2$ let $X^i(t;x):[0,T]\times \RR^d\rightarrow \RR^d$ be the characteristics associated to \eqref{kinetic}, which obey to 
$$
\begin{array}{l}
\displaystyle{\frac{dX^i(t;x)}{dt}=\frac{1}{h(t)}\int_{t-\tau(t)}^t \alpha (t-s) F[\mu_s](X^i(s;x)) ds,}\\
\displaystyle{ X^i(0;x)=x,}
\end{array}
$$
for any $ x \in \RR^d $. We remember that the characteristics $ X^i $ are well-defined in $[0,T]$ since, by Theorem \ref{esistenza_unicita}, $ \mu^i_t $ have uniformly compact support on such interval. Then, we have that
$$
\mu^i_t =X^i(t;\cdot) \# \mu^i_s, \quad \forall t,s\in [0,T].
$$
Moreover, as before, we define
$$
R^T_{i;X}:=\max_{s\in[-\tau(0),T]} R_X[\mu^i_s].
$$
Then, we choose an optimal transport map between $\mu^1_0$ and $\mu^2_0$ with respect to $d_1$ (call it $S_0(x)$) such that $\mu^2_0=S_0\# \mu^1_0$ and
$$
d_1(\mu^1_0,\mu^2_0)=\int_{\RR^d} |x-S_0(x)|d\mu^1_0(x).
$$
Moreover, we define the map $T^t$ for any $t\in [0,T]$ as
\begin{equation}
\label{optimal}
T^t:=X^2(t;\cdot) \circ S_0\circ X^1(t;\cdot)^{-1}.
\end{equation}
Therefore, we can write
$$
T^t \# \mu^1_t=\mu^2_t, \quad \forall t\in[0,T]
$$
and 
$$
d_1(\mu^1_t,\mu^2_t)\leq \int_{\RR^d} |x-T^t(x)|d\mu^1_t(x):=u(t).
$$
Using \eqref{optimal} yields
$$
u(t)=\int_{\RR^d} \left| X^1(t;x)-X^2(t;S_0(x))\right| d\mu^1_0(x).
$$
Moreover, we extend the definition of $T^t$ on the interval $[-\tau(0),0]$ and we define $u(t)$ for $t\in [-\tau(0),0]$ as
$$
u(t):=d_1(g^1_t,g^2_t)=\int_{\RR^d} \left| x-T^t(x)\right| dg^1_t(x).
$$
Now, differentiating $u(t)$ and using \eqref{optimal}, we obtain
$$
\frac{du(t)}{dt}\leq \frac{1}{h(t)}\int_{\RR^d}\int_{t-\tau(t)}^t \alpha(t-s) \left| F[\mu^1_s](x)-F[\mu^2_s](T^t(x))\right| ds d\mu_t^1(x)=:J.
$$
We consider, now, the case of $F$ as in \eqref{F_symm}. Then,
$$
\begin{array}{l}
\displaystyle{|F[\mu^1_s](x)-F[\mu^2_s](T^t(x))|}\\
\hspace{2 cm}
\displaystyle{\leq \int_{\RR^d}\left| \psi(|x-y|)(y-x)-\psi(|T^t(x)-T^s(y)|)(T^s(y)-T^t(x))\right| d\mu^1_s(y)}\\
\hspace{2 cm}
\displaystyle{ \leq \int_{\RR^d} \left| \psi(|x-y|)-\psi(|T^t(x)-T^s(y)|)\right|\cdot |y-x| d\mu^1_s(y)}\\
\hspace{5.5 cm}
\displaystyle{+ \int_{\RR^d} \psi(|T^t(x)-T^s(y)|) \cdot \left| y-x -(T^s(y)-T^t(x))\right| d\mu^1_s(y)}\\
\hspace{2 cm}
\displaystyle{ =(1)\ +\ (2).}
\end{array}
$$
Now, 
$$
\begin{array}{l}
\displaystyle{ (1)\leq L\int_{\RR^d} \left| x-y-T^t(x)+T^s(y)\right| \cdot |y-x| d\mu^1_s(y)}\\
\hspace{0.55 cm}
\displaystyle{ \leq L(|x|+R_{1;X}^T) \left[ |x-T^t(x)|+\int_{\RR^d} |y-T^s(y)|d\mu_s^1(y)\right],}
\end{array}
$$
and
$$
\begin{array}{l}
\displaystyle{ (2)\leq |x-T^t(x)|+\int_{\RR^d} |y-T^s(y)|d\mu_s^1(y).}
\end{array}
$$
Therefore, there exists a constant $C>0$ depending only on $T$ such that
$$
J\leq C\left( u(t)+\frac{1}{h(t)}\int_{t-\tau(t)}^t \alpha (t-s) u(s) ds \right).
$$
Now, if we take $F$ as in \eqref{F_nonsymm}, we have that
$$
\begin{array}{l}
\displaystyle{ \left| F[\mu_s^1](x)-F[\mu_s^2](T^t(x))\right|}\\
\displaystyle{ = \Biggl| \frac{\int_{\RR^d} \psi(|x-y|)(y-x)d\mu_s^1(y)}{\int_{\RR^d} \psi(|x-y|)d\mu_s^1(y)}  
%\hspace{5 cm}
 -\frac{\int_{\RR^d}\psi(|T^t(x)-y|)(y-T^t(x))d\mu_s^2(y)}{\int_{\RR^d} \psi(|T^t(x)-y|)d\mu_s^2(y)}\Biggr| }\\
 \displaystyle{ \leq \frac{1}{\psi(R_{1;X}^T)} \left| \int_{\RR^d} \psi(|x-y|)(y-x)d\mu_s^1(y)-\int_{\RR^d} \psi(|T^t(x)-y|)(y-T^t(x))d\mu_s^2(y)\right|}\\
 \hspace{2 cm}
 \displaystyle{ +\frac{1}{\psi(R_{1;X}^T)\psi(R_{2;X}^T)}\left| \int_{\RR^d} \psi(|T^t(x)-y|)(y-T^t(x))d\mu_s^2(y)\right|}\\
 \hspace{4 cm}
 \displaystyle{ \times \left| \int_{\RR^d} \psi(|x-y|)d\mu_s^1(y) -\int_{\RR^d} \psi(|T^t(x)-y|)d\mu_s^2(y) \right| .}  
\end{array}
$$
As before we have that
$$
\begin{array}{l}
\displaystyle{ \left| \int_{\RR^d} \psi(|x-y|)(y-x)d\mu_s^1(y)-\int_{\RR^d} \psi(|T^t(x)-y|)(y-T^t(x))d\mu_s^2(y)\right|}\\
\hspace{5 cm}
\displaystyle{ \leq \left[ (|x|+R_{1;X}^T)L+1\right] (|x-T^t(x)|+u(s)).}
\end{array}
$$
Furthermore,
$$
\left| \int_{\RR^d} \psi(|T^t(x)-y|)(y-T^t(x))d\mu_s^2(y)\right| \leq R_{2;X}^T + |T^t(x)|,
$$
and
$$
\left| \int_{\RR^d} \psi(|x-y|)d\mu_s^1(y) -\int_{\RR^d} \psi(|T^t(x)-y|)d\mu_s^2(y) \right| \leq L(|x-T^t(x)|+u(s)).
$$
Hence, we obtain again the existence of a constant $C>0$ depending only on $L$ and $T$ such that
$$
\frac{du(t)}{dt}\leq C\left( u(t)+\frac{1}{h(t)}\int_{t-\tau(t)}^t \alpha (t-s) u(s) ds\right).
$$
Denote 
$$
\overline{u}=\max_{s\in[-\tau(0),0]} u(s)=\max_{s\in [-\tau(0),0]} d_1(g^1_s,g^2_s),
$$
and define $w(t):=e^{-Ct}u(t)$. Then, we have that
\begin{equation}
\label{dwdt}
\frac{dw(t)}{dt}\leq \frac{C}{h(t)}\int_{-\tau(0)}^t \alpha (t-s) w(s) ds.
\end{equation}
Thus, we can rewrite \eqref{dwdt} as
$$
\frac{dw(t)}{dt}\leq K\tau(0) \overline{u} +K\int_0^t w(s)ds,
$$
for some $K>0$. This gives us the following estimate:
$$
w(t)\leq \tilde{K}\overline{u}, \qquad \forall t\in [0,T],
$$
for some $\tilde{K}>0$. Then, by definition of $w$ we have
$$
d_1(\mu_t^1,\mu_t^2)\leq u(t)\leq \tilde{K} e^{CT}\overline{u}, \quad \forall \ t\in [0,T],
$$
which gives us the thesis of this lemma.
\end{proof}
We are finally ready to prove Theorem \ref{Teorema_2}.
\begin{proof}[Proof of Theorem \ref{Teorema_2}]
Fixed $g_s\in \mathcal{C}([-\tau(0),0];\mathcal{P}_1(\RR^d))$, we construct the familiy of $N$- particle approximations of $g_s$, which is a family $\{ g_s^N\}_{N\in \nat}$ such that
$$
g_s^N=\sum_{i=1}^N \delta (x-x_i^0(s)),
$$
where $x_i^0 \in \mathcal{C}([-\tau(0),0];\RR^d)$ satisfy
$$
\max_{s\in[-\tau(0),0]} d_1(g_s^N,g_s)\rightarrow 0, \quad \text{as}\ N\rightarrow +\infty.
$$
Moreover, let $\{ x_i^N\}$ be the solution to \eqref{modello}, with initial data $x_i(s)=x_i^0(s)$ for any $s\in[-\tau(0),0]$ and we denote
$$
\mu^N_t:=\sum_{i=1}^N \delta (x-x_i^N(t)),
$$
for any $t\in [0,T]$, which is a weak solution to \eqref{kinetic}. Now, since \eqref{delaycondition} holds, then we know that there exists a constant $C>0$ such that
$$
d_X(t)\leq d_X(0)e^{-Ct}\leq \left( \max_{s\in [-\tau(0),0]} d_X(s)\right) e^{-Ct},
$$
for any $t\geq 0$. Fixing $T\geq 0$, by Lemma \ref{stability} we have that there exists a constant $K>0$ independent of $N$ such that
$$
d_1(\mu_t,\mu_t^N)\leq K \max_{s\in[-\tau(0),0]} d_1(g_s,g_s^N),
$$ 
for any $t\in [0,T]$, where $\mu_t$ is the weak solution to \eqref{kinetic} with initial measure $g_s$. Sending $N\rightarrow +\infty$ we have that $d_X(t)\rightarrow d_X(\mu_t)$ and for any $s\in [-\tau(0),0]$, $d_X(g_s)=d_X(s)$. This gives \eqref{tesi finale} for any $t\in [0,T]$. Since $T$ can be chosen arbitrarly, then the theorem is proved.
\end{proof}
\section{Acknowledgments}
The research of the author is partially supported by the GNAMPA 2019 project \emph{Modelli alle derivate parziali per sistemi multi‐agente} (INdAM).

\end{document}